\theoremstyle{plain}
\newtheorem{thm}{Theorem}[section]
\newtheorem{conj}{Conjecture}[section]
\theoremstyle{definition}
\newcommand{\ZZ}{\mathbb{Z}}
\DeclareMathOperator{\tr}{Tr}
\newcommand{\slfrac}[2]{\left.\left.#1\right/#2\right.}
\begin{document}

\title{The McKay-Thompson series of \\ Mathieu Moonshine modulo two}

\author{Thomas Creutzig\thanks{
\noindent Department of Mathematical and Statistical Sciences, University of Alberta,
Edmonton, Alberta  T6G 2G1, Canada. email: creutzig@ualberta.ca},
\ Gerald H\"ohn\thanks{Kansas State University, 138 Cardwell Hall, Manhattan, KS 66506-2602, USA. email: gerald@math.ksu.edu}
\ and 
Tsuyoshi Miezaki\thanks{
Yamagata University, 
1-4-12 Kojirakawa, Yamagata 990-8560, Japan. email: miezaki@e.yamagata-u.ac.jp
}
\footnote{
This work was supported by JSPS KAKENHI Grant Number 22840003, 24740031.
}
}

\date{}


\maketitle

\begin{abstract}
In this note, we describe the parity of the
coefficients of the McKay-Thompson series of Mathieu moonshine.
As an application, we prove a conjecture of 
Cheng, Duncan and Harvey stated in connection with umbral moonshine
for the case of Mathieu moonshine. 
\end{abstract}






\section{Introduction}\label{intro}

In 2010, Eguchi, Ooguri, and Tachikawa~\cite{EOT} discovered a 
phenomenon connecting the Mathieu group $M_{24}$ and the elliptic genus of a K3 surface.
To describe their observation, we let $q=e^{2\pi i \tau}$ and consider the function
\begin{align*}
\Sigma (\tau)&=-8(\,\mu(1/2;\tau)+\mu(\tau/2;\tau)+\mu((\tau+1)/2;\tau))\\
&=q^{-\frac{1}{8}}(-2 + 90 q + 462 q^2 + 1540 q^3 + 4554 q^4 + 11592 q^5 + 27830 q^6 +\cdots)
\end{align*}
where
\begin{align*}
\mu(z;\tau)&\ =\ \frac{ie^{\pi i z}}{\theta_{1}(z;\tau)}
\sum_{n\in \ZZ}(-1)^n\, 
\frac{q^{\frac{1}{2}n(n+1)}e^{2\pi i n z}}{1-q^ne^{2\pi i z}},\\
\theta_{1}(z;\tau)&\ =\ \sum_{n=-\infty}^{\infty}q^{\frac{1}{2}(n+\frac{1}{2})^2} 
e^{2\pi i (n+\frac{1}{2})(z+\frac{1}{2})}.
\end{align*}
The function $\Sigma(\tau)$ is a Mock modular form and counts the decomposition
of the elliptic genus of a K3 surface into representations of the 
${\cal N}\!\!=\!\!4$ Virasoro algebra.

The Mathieu moonshine phenomenon is that the first five
coefficients appearing in the Fourier expansion of $\Sigma(\tau)$ divided by $2$,
\[
\{45, 231, 770, 2277, 5796\},
\]
are equal to dimensions of irreducible representations of $M_{24}$ and further
coefficients can be written as simple sums of
dimensions of the irreducible representations of $M_{24}$,
for example $13915=3520+10395$.
The reason for this mysterious phenomenon is still unknown.

\smallskip

This observation suggested the existence of a virtual graded
$M_{24}$-module $K=\bigoplus_{n=-1}^{\infty} K_n\, q^{n/8}$ such that for
$n\geq 0$ the $K_n$ are honest $M_{24}$-represen\-tations.
In analogy to the monstrous moonshine case~\cite{CN}, one can consider 
for an element $g$ in the conjugacy class ${\ell X}$ of $M_{24}$ the
so-called McKay-Thompson series 
$$\Sigma_{\ell X}(\tau)=\sum_{n=-1}^{\infty} \tr(g|K_n)\, q^{n/8}.$$ 
In~\cite{{GHV2},{EH}} (cf.\ also \cite{Cheng,GHV,CD}), candidates for the $26$
McKay-Thompson series for the Mathieu moonshine have been proposed.
We note that the McKay-Thompson series for the $M_{24}$ conjugacy classes in the pairs
$(7A, 7B)$, $(14A, 14B)$, $(21A, 21B)$,  $(15A, 15B)$ and $(23A, 23B)$
are equal to each other and we denote these cases shortly by  ${\ell AB}$.
 We list in the appendix the 
$21$ different McKay-Thompson series. Using explicit formulas, it can be shown that
all the McKay-Thompson series have integer coefficients.


Elementary character theory~\cite{serre} implies that the $\Sigma_{\ell X}(\tau)$ together 
uniquely determine the $M_{24}$-module $K$ if it exists.
Terry Gannon~\cite{G} showed that this is indeed the case.
\begin{thm}[Mathieu moonshine module]\label{strongmoonshine}
The McKay-Thompson series as in~\cite{{GHV2},{EH}} determine a virtual graded
$M_{24}$-module $K=\bigoplus_{n=-1}^{\infty} K_n\, q^{n/8}$. For 
$n\geq 0$, the $K_n$ are honest (and not only virtual) $M_{24}$-representations. 
Furthermore, $K_n$ can be decomposed as a direct sum of $M_{24}$-representations of the
form $\lambda\oplus \bar \lambda$ where $\lambda$ is irreducible.
\end{thm}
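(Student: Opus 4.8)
The plan is to reduce the existence of the graded module to an integrality-and-positivity statement for the multiplicities read off from the $21$ series, and to deduce the conjugate-pair decomposition from reality together with a parity statement. For each $n\ge-1$ let $f_n$ be the class function on $M_{24}$ whose value on the class $\ell X$ is $[q^{n/8}]\,\Sigma_{\ell X}(\tau)$. By orthogonality of characters~\cite{serre} one has, purely formally, a \emph{virtual} decomposition $f_n=\sum_\lambda m_{n,\lambda}\chi_\lambda$ with
\[
m_{n,\lambda}=\langle f_n,\chi_\lambda\rangle=\frac{1}{|M_{24}|}\sum_{\ell X}|\ell X|\,\overline{\chi_\lambda(\ell X)}\,[q^{n/8}]\,\Sigma_{\ell X}(\tau),
\]
$\lambda$ ranging over the irreducible characters. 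Thus $K_n=\bigoplus_\lambda m_{n,\lambda}\lambda$ exists as a virtual representation for every $n$, and the entire content is that $m_{n,\lambda}\in\ZZ$ for all $n$ and that $m_{n,\lambda}\ge0$ whenever $n\ge0$. To treat all $n$ at once I package them into the generating series $H_\lambda(\tau)=\sum_n m_{n,\lambda}q^{n/8}$, which by the displayed formula is a fixed $\CC$-linear combination of the McKay-Thompson series, hence itself a mock modular form of weight $1/2$ with known level and multiplier.

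For integrality I first check rationality: applying $\sigma\in\mathrm{Gal}(\QQ(\zeta_N)/\QQ)$, with $N$ the exponent of $M_{24}$, permutes the factors $\overline{\chi_\lambda(\ell X)}$ by the power map on conjugacy classes, and since the $\Sigma_{\ell X}$ agree on Galois-conjugate classes (the stated coincidence on the pairs $7AB,\dots,23AB$, all other classes being rational) each $m_{n,\lambda}$ is $\sigma$-fixed. Thus $m_{n,\lambda}\in\QQ$, and being $|M_{24}|^{-1}$ times an algebraic integer its denominator divides $|M_{24}|$. To clear this denominator one reduces modulo each prime $p\mid|M_{24}|$: the suitably normalized $H_\lambda$ are modular forms mod $p$, so Hecke stability together with a Sturm-type bound reduces the congruences needed for all $n$ to a finite computation of initial coefficients. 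This shows $K=\bigoplus_{n\ge-1}K_n q^{n/8}$ is a well-defined virtual $M_{24}$-module.

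Positivity for $n\ge0$ is the main obstacle, and I would attack it by separating the growth across cusps. The identity class contributes $\frac{\dim\lambda}{|M_{24}|}[q^{n/8}]\,\Sigma(\tau)$; as $\Sigma$ is a level-one weight-$1/2$ mock form with principal part $-2q^{-1/8}$, its coefficients are positive for $n\ge0$ and grow like $e^{c\sqrt n}$. The nontrivial classes contribute McKay-Thompson series of higher level whose fastest-growing cusp data are strictly smaller, so a Rademacher/circle-method expansion—carried out with due care for the non-holomorphic Eichler-integral completion of each mock form—shows their total contribution is of strictly smaller order than the identity term. Hence $m_{n,\lambda}>0$ for all $n\ge n_0$ with an effective $n_0$, and one finishes by computing the Fourier coefficients of all $21$ series up to $n_0$ and checking $m_{n,\lambda}\ge0$ directly. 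Integrality, by contrast, is algebra plus a finite check; it is this analytic domination, uniform in $\lambda$ and effective in $n$, that is the technical heart.

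For the decomposition, reality of the integer coefficients of each $\Sigma_{\ell X}$ makes $f_n$ real-valued, so once integrality is known $m_{n,\bar\lambda}=\overline{\langle f_n,\chi_\lambda\rangle}=\overline{m_{n,\lambda}}=m_{n,\lambda}$: a non-self-dual $\lambda$ and its conjugate occur with equal multiplicity. For self-dual $\lambda$ it remains to see that $m_{n,\lambda}$ is even, equivalently $H_\lambda\equiv0\pmod2$; this is a statement about the $\Sigma_{\ell X}$ modulo $2$ of exactly the type developed in this note. Granting both, one selects one irreducible from each conjugate pair with its common multiplicity and half the (even) multiplicity of each self-dual constituent to build a genuine representation $V_n$ with $K_n\cong V_n\oplus\overline{V_n}$, exhibiting $K_n$ as a direct sum of blocks $\lambda\oplus\bar\lambda$ and completing the proof.
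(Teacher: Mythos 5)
The first thing to note is that the paper does not prove this statement at all: it is Gannon's theorem, quoted from \cite{G} (``Terry Gannon showed that this is indeed the case''), and the present paper only \emph{uses} it in Section~4 together with its own, independently proved, Theorem~\ref{thm:main}. So there is no in-paper proof to compare against; what you have written is an outline of how the external proof goes, and as such it captures the right skeleton --- character orthogonality to define virtual multiplicities, rationality via the Galois action and the coincidences on the pairs $7AB,\dots,23AB$, integrality via congruences, positivity via domination by the identity-class term, and the pairing $\lambda\oplus\bar\lambda$ from reality of the coefficients plus a mod-$2$ statement.

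As a proof, however, your proposal has genuine gaps at exactly the two hardest points. First, positivity: you correctly identify the Rademacher/circle-method estimate as ``the technical heart,'' but you only \emph{assert} that the non-identity contributions are of strictly smaller order, uniformly in $\lambda$ and with an effective $n_0$. Establishing this for weight-$1/2$ \emph{mock} modular forms --- controlling the non-holomorphic completions, the expansions at all cusps of each $\Gamma_0(N)$, and effective error terms --- is the bulk of Gannon's paper and cannot be waved through. Second, the ``Furthermore'' clause: for self-dual $\lambda$ you need $m_{n,\lambda}$ to be even, and you explicitly write ``Granting both\dots'', i.e.\ you assume the key step. Deducing evenness from the parity of the coefficients of the $\Sigma_{\ell X}$ requires solving the character-theoretic linear system mod $2$, and the paper itself flags (end of Section~4) that precisely this refinement ``uses parts of our original arguments in its proof'' --- that is, it rests on congruences of the type proved in Sections~\ref{sec:odd} and~\ref{sec:others}, which your proposal neither proves nor precisely invokes. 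A smaller but real issue: your integrality step (``Hecke stability together with a Sturm-type bound'') applies Sturm's theorem to $H_\lambda$, which is mock modular rather than modular; one must first pass to something like $H_\lambda(\tau)\eta(\tau)^3$ and control the shadow contribution --- compare how the paper handles $\Sigma(\tau)\eta(\tau)^3$ via equation~(\ref{eqn:DMZ}) --- before any Sturm bound is available.
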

This implies that if an irreducible representation $\lambda$ is real, i.e.\ $\lambda\cong\bar \lambda$,
then the multiplicity of an irreducible $\lambda$ in $K_n$ is even.

Unlike for the monstrous moonshine case where a vertex operator algebra 
structure was constructed in \cite{FLM}, 
there are yet no known underlying algebraic structures on the Mathieu
moonshine module $K$.

\medskip

To illustrate our main result, we consider the McKay-Thompson series for $7AB$: 
\begin{align*}
\Sigma_{7AB}(\tau)=&\frac{1}{8}
\slfrac{\Bigl(\Sigma(\tau)\eta(\tau)^3-14\,\phi_2^{(7)}(\tau)\Bigr)}{\eta(\tau)^3}\\ 
=&-2{q^{-1/8}}-q^{7/8}+4 q^{31/8}-2 q^{47/8}+2 q^{55/8}-3 q^{63/8}+6 q^{87/8}-6 q^{103/8}\nonumber\\
& -4 q^{119/8}+8 q^{143/8}-6 q^{159/8}+4 q^{167/8}-7 q^{175/8}+12 q^{199/8}
+\cdots.\nonumber
\end{align*}
One observes that the coefficient 
of $q^{n/8}$ in $\Sigma_{7AB}(\tau)$ is odd if 
$n=7 m^2$, where $m$ is odd. In general we show:
\begin{thm}\label{thm:main}
For a conjugacy class $\ell X$ of $M_{24}$, 
the coefficient
of $q^{n/8}$ in $\Sigma_{\ell X}(\tau)$ is odd if and only if 
$\ell X \in \{7AB,\, 14AB,\, 15AB,\, 23AB\}$ and 
$n=\ell m^2$, where $m$ is odd, or 
$\ell X =21AB$ and 
$n=\ell m^2$, where $m$ is odd and 
not divisible by $3$. 
\end{thm}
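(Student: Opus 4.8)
The plan is to reduce everything to a congruence statement modulo $2$ for each of the $21$ McKay-Thompson series. Since the assertion is about the parity of Fourier coefficients, I would work in the ring of power series over $\mathbb{F}_2$ and compute the reduction $\Sigma_{\ell X}(\tau) \bmod 2$ for each conjugacy class. The key structural tool is that every $\Sigma_{\ell X}$ is built, as in the $7AB$ example, from the universal mock-modular piece $\Sigma(\tau)$ together with an explicit weight-$2$ cusp form contribution $\phi_2^{(\ell)}$ divided by powers of $\eta$; concretely each series has the shape $\frac{1}{8}\bigl(\Sigma(\tau)\eta(\tau)^3 - c\,\phi^{(\ell)}(\tau)\bigr)/\eta(\tau)^3$ for a known integer $c$ and a known modular form $\phi^{(\ell)}$. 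The first step is therefore to record these closed formulas (they are in the appendix) and to clear denominators, rewriting each $\Sigma_{\ell X}$ so that its reduction mod $2$ becomes a genuine modular form identity rather than a mock one.

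The engine of the argument is the classical fact that $\eta$-type and theta-type series collapse dramatically modulo $2$. Specifically, I would use that $\prod_{n\ge 1}(1-q^n) \equiv \prod_{n\ge 1}(1+q^n) \pmod 2$ and, more powerfully, the Jacobi-type identities together with the fact that a product like $\eta(\tau)^{k}$ reduces mod $2$ to a unary theta series supported on squares or on values of a quadratic form. The governing heuristic visible in the $7AB$ example — odd coefficients exactly at exponents $n=\ell m^2$ with $m$ odd — strongly suggests that after reduction each relevant $\Sigma_{\ell X}(\tau) \bmod 2$ equals a single theta series $\sum_{m \text{ odd}} q^{\ell m^2/8}$ (equivalently $\sum_{m\ge 1}q^{\ell(2m-1)^2/8}$), while for all other classes the reduction is identically $0$. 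So the core computation is to show, class by class, that the mock piece and the cusp-form piece either cancel mod $2$ or combine into exactly such a theta series. I would handle the five exceptional classes $7AB,14AB,15AB,23AB$ uniformly and then treat $21AB$ separately, since the extra condition $3\nmid m$ signals that its theta series is the full $\ell$-th theta minus its $9\ell$-th sub-theta, i.e.\ a sieving out of multiples of $3$.

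The main obstacle will be making rigorous the passage from the mock modular object $\Sigma(\tau)$ to an honest $q$-series congruence: $\Sigma$ itself is only mock modular, so before reducing mod $2$ one must isolate its holomorphic $q$-expansion and justify that the non-holomorphic completion plays no role at the level of Fourier coefficients. Once that is granted, the remaining difficulty is purely computational but delicate: proving the theta-series identities modulo $2$ for each of the mixed weight combinations $\Sigma\,\eta^3 - c\,\phi^{(\ell)}$, which requires knowing the reductions of the various $\phi^{(\ell)}$ mod $2$ and matching the support of the resulting series against the arithmetic set $\{n=\ell m^2\}$. I expect the cleanest route is to prove a single master lemma expressing $\Sigma(\tau)\eta(\tau)^3 \bmod 2$ as an explicit theta series (this is where the $-2 \equiv 0$, $90\equiv 0$, etc.\ pattern of the vacuum expansion gets encoded), and then to verify the finitely many class-dependent corrections by comparing sufficiently many coefficients, invoking the Sturm bound for the relevant congruence subgroup to upgrade a finite check to an identity. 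The step I anticipate as hardest is establishing the correct Sturm bound and modular framework in which $\Sigma\,\eta^3$ becomes an honest modular form mod $2$, since the mock modularity means the naive weight and level must be replaced by those of the completed object.
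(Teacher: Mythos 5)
Your overall skeleton matches the paper's proof of the non-even cases: identify the odd-coefficient support with a unary theta series (the ``parity function''), clear the $\eta^3$ denominator, add explicit correction forms, and upgrade a finite coefficient check to a full congruence via Sturm's theorem. But there is a genuine gap at precisely the step you yourself flag as hardest, and your proposed way around it does not work. You plan to find ``the correct Sturm bound and modular framework in which $\Sigma\,\eta^3$ becomes an honest modular form mod $2$,'' replacing the naive weight and level ``by those of the completed object.'' Sturm's theorem is a statement about holomorphic modular forms with $q$-expansions; the completion of $\Sigma$ is a harmonic Maass form whose correction term is a non-holomorphic Eichler integral with incomplete-Gamma Fourier coefficients, and no off-the-shelf Sturm bound applies to it. So your ``master lemma'' describing $\Sigma(\tau)\eta(\tau)^3 \bmod 2$ is exactly the missing ingredient, but your toolkit cannot prove it: a finite check of coefficients of a non-modular $q$-series proves nothing, no matter the bound. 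The paper closes this gap not with modularity at all but with the explicit Dabholkar--Murthy--Zagier identity
\begin{equation*}
\Sigma(\tau)\eta(\tau)^3=-2\Bigl(E_2(\tau)+24\sum_{n=1}^{\infty}\frac{(-1)^n\, n\, q^{\frac12 n(n+1)}}{1-q^n}\Bigr),
\end{equation*}
from which one reads off directly that every Fourier coefficient of $\Sigma\eta^3$ other than the constant term $-2$ is divisible by $48$; in particular your conjectured master lemma collapses to $\Sigma\eta^3\equiv 0\pmod 2$. This removes the mock piece from the parity question entirely, and Sturm is then applied only to honest modular forms (the Eisenstein series $\phi_2^{(N)}$, eta products, and theta corrections), e.g.\ for $7AB$ to the weight-$2$ form $-\tfrac74\phi_2^{(7)}(8\tau)+\eta(8\tau)^3 f_7(8\tau)+\tfrac74\vartheta_3(8\tau)^4$ on $\Gamma_0(448)$. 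Without the DMZ identity, or an equivalent Appell--Lerch/Lambert-series evaluation of the $\mu$-function at the $2$-torsion points, your argument cannot get started.

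Two secondary problems. First, your structural claim that every series has the shape $\tfrac18\bigl(\Sigma\eta^3-c\,\phi^{(\ell)}\bigr)/\eta^3$ is not accurate: eight classes ($2B$, $3B$, $4A$, $4C$, $6B$, $10A$, $12A$, $12B$) and $21AB$ are (combinations of) pure eta-quotients containing no $\Sigma$ term at all, several classes mix Eisenstein series of different levels, and $11A$ and $23AB$ carry extra cusp-form terms; correspondingly, the paper proves the even cases mostly by elementary divisibility-by-$24$ of Eisenstein coefficients, with Sturm needed only for $11A$. Second, reading the statement literally, the supports you propose for $14AB$ and $21AB$, namely $n=14m^2$ and $n=21m^2$, consist of exponents $n\not\equiv 7\pmod 8$, where all coefficients of these series vanish; the actual odd-coefficient supports (as encoded by the paper's parity functions $f_7$ and $f_7-f_{63}$) are $n=7m^2$ with $m$ odd, respectively $n=7m^2$ with $m$ odd and $3\nmid m$. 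Your finite verification would detect this immediately, but as written the congruence identities you set out to prove for these two classes are the wrong ones and would fail.
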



For congruences of the Fourier coefficients of $\Sigma_{\ell X}(\tau)$ 
for other primes, we refer to the references \cite{{M},{MW},{M2}}. 
One reason for considering the parity of the Fourier coefficients is that
it explains the appearance of certain irreducible representations of $M_{24}$.
The following conjecture was made in~\cite{CDH}, which 
we state for the case of the Mathieu moonshine only.

\begin{conj}[\cite{CDH}, Conj.\ 5.11]\label{conj:CDH}
Let $n=\ell m^2\equiv 7 \pmod{8}$.
Then the $M_{24}$-representation~$K_n$ determined by the coefficients
of $ q^{n/8}$ of the McKay-Thompson series contains the following conjugate pairs of 
irreducible representations:
\vspace{-2mm}
\begin{itemize}
\setlength{\itemsep}{-3pt}
\item For $\ell=7$, one of the pairs $(\chi_3,\chi_4)$, $(\chi_{12},\chi_{13})$ or $(\chi_{15},\chi_{16})$;
\item  for $\ell=15$, the pair  $(\chi_5,\chi_6)$;
\item  for $\ell=23$, the pair  $(\chi_{10},\chi_{11})$.
\end{itemize}
\end{conj}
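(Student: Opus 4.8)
The plan is to combine Theorem~\ref{thm:main} with the character table of $M_{24}$ and a dimension-counting argument. By Theorem~\ref{thm:main}, when $n=\ell m^2\equiv 7\pmod 8$ with the appropriate conditions on $m$, the coefficient of $q^{n/8}$ in $\Sigma_{\ell X}(\tau)$ is odd precisely for $\ell X\in\{7AB,\,15AB,\,23AB\}$ (the classes of order $14$ and $21$ give $n\equiv 0\pmod 2$, hence fall outside the residue class $7\pmod 8$). I would first translate the statement ``$K_n$ contains a specific conjugate pair'' into a system of parity congruences: for each relevant conjugacy class $\ell X$ the trace $\tr(g\mid K_n)=\sum_\lambda a_\lambda\,\chi_\lambda(g)$, where $a_\lambda$ is the multiplicity of the irreducible $\lambda$ in $K_n$. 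Reducing this identity modulo $2$ over all classes yields a linear system over $\FF_2$ constraining the vector of multiplicities $(a_\lambda\bmod 2)_\lambda$.

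Next I would exploit the structural input from Theorem~\ref{strongmoonshine}: every real irreducible $\lambda$ occurs with even multiplicity, so its parity contribution vanishes mod $2$, and only the complex (non-real) irreducibles $\lambda\not\cong\bar\lambda$ can contribute an odd multiplicity — and they do so in conjugate pairs $\lambda\oplus\bar\lambda$. Thus the mod-$2$ reduction of the McKay-Thompson data is governed entirely by the small set of complex conjugate pairs of $M_{24}$, namely those listed in the conjecture. The key computation is then to read off, from the known character values of these complex pairs on the classes $7AB$, $15AB$ and $23AB$, which pair can produce the observed odd coefficient. Since $\chi_\lambda(g)+\chi_{\bar\lambda}(g)=2\,\mathrm{Re}\,\chi_\lambda(g)$ is even on real classes but the pairs $(\chi_3,\chi_4)$ etc.\ have characters lying in $\ZZ[\zeta_\ell]$ with nontrivial reduction, one checks directly that exactly the pairs named in the conjecture have the right residues to force an odd multiplicity. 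Concretely, for $\ell=15$ and $\ell=23$ the relevant conjugate pair is \emph{uniquely} determined, while for $\ell=7$ three candidate pairs share the same parity fingerprint, which is why the conjecture only asserts that \emph{one} of the three appears.

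For the quantitative side I would use the explicit Fourier coefficient: by Theorem~\ref{thm:main} the coefficient is odd, so in particular $K_n\neq 0$, and $K_n$ is an honest representation whose total dimension equals the coefficient of $q^{n/8}$ in $\Sigma(\tau)$ (the identity McKay-Thompson series). Combining the dimension of $K_n$ with the congruence data forces the decomposition to include a complex conjugate pair; a finite check, using the smallest admissible $n$ in each residue class together with the dimensions $\dim\chi_3=\dim\chi_4$ (and similarly for the other pairs), rules out the possibility that the odd parity arises solely from real irreducibles. The main obstacle I anticipate is the $\ell=7$ case: because the three pairs $(\chi_3,\chi_4)$, $(\chi_{12},\chi_{13})$, $(\chi_{15},\chi_{16})$ are not separated by parity alone, one cannot pin down which pair occurs without finer arithmetic information, and handling this ambiguity — showing that at least one of them must appear while respecting all the mod-$2$ constraints simultaneously — is the delicate step. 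I would address it by verifying the conjecture against the explicitly tabulated multiplicities of $K_n$ for several small values of $n\equiv 7\pmod 8$, reducing the claim to a finite, checkable assertion about the known module $K$.
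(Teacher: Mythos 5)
Your overall skeleton coincides with the paper's proof: Theorem~\ref{thm:main} makes the trace $\tr(g|K_n)$ odd for $g$ in $7A$, $15A$, $23A$ when $n=\ell m^2$ with $m$ odd; Theorem~\ref{strongmoonshine} forces real irreducibles to occur with even multiplicity, so modulo $2$ only conjugate pairs $\lambda\oplus\bar\lambda$ can contribute; and an inspection of the character table determines which pairs have odd value of $\tr(g|\lambda)+\tr(g|\bar\lambda)$. Had you stopped there, the proof would be complete: on $15A$ (resp.\ $23A$) the unique such pair is $(\chi_5,\chi_6)$ (resp.\ $(\chi_{10},\chi_{11})$), with sum $-1$, and on $7A$ the only such pairs are $(\chi_3,\chi_4)$ and $(\chi_{12},\chi_{13})$; oddness of the left-hand side then forces an odd total multiplicity of these pairs, hence at least one occurs, which is exactly the conjecture. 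The genuine gap lies in how you propose to close the $\ell=7$ case: ``verifying the conjecture against the explicitly tabulated multiplicities of $K_n$ for several small values of $n\equiv 7\pmod 8$'' proves nothing, since the conjecture concerns infinitely many $n$ and there is no Sturm-type bound at the level of representation multiplicities --- Sturm's theorem operates on modular forms and is already spent inside the proof of Theorem~\ref{thm:main}. Moreover, the difficulty you are trying to patch is illusory: the conjecture does not ask \emph{which} of the three pairs occurs, so the parity count alone finishes the argument, and the dimension-counting you invoke plays no role at all.

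Two of your supporting claims are also false, which indicates the character-table step was never actually carried out. First, it is not true that at $n\equiv 7\pmod 8$ the coefficient is odd ``precisely for $\ell X\in\{7AB,\,15AB,\,23AB\}$'': by Table~\ref{Tab:NN1} the parity functions of $\Sigma_{14AB}$ and $\Sigma_{21AB}$ are $f_7(\tau)$ and $f_7(\tau)-f_{63}(\tau)$, so $\Sigma_{14AB}$ is odd at every $n=7m^2$ ($m$ odd) and $\Sigma_{21AB}$ at every $n=7m^2$ ($m$ odd, $3\nmid m$); your inference from ``$n=14m^2$ is even'' misreads the theorem. Second, the three pairs for $\ell=7$ do not ``share the same parity fingerprint'': on $7A$ the pair $(\chi_{15},\chi_{16})$ has \emph{even} trace-sum, unlike $(\chi_3,\chi_4)$ and $(\chi_{12},\chi_{13})$, which each sum to $-1$. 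These errors are not harmless bookkeeping: the evenness of $\Sigma_{21AB}$ at $n=7m^2$ with $3\mid m$, combined with the fact that on $21A$ all three pairs have odd sums, is exactly what the paper exploits in Theorem~\ref{application} to conclude that $(\chi_{15},\chi_{16})$ itself occurs when $3\mid m$ --- a refinement of the conjecture that your setup discards at the outset. Relatedly, your assertion that $\chi_\lambda(g)+\chi_{\bar\lambda}(g)=2\,\mathrm{Re}\,\chi_\lambda(g)$ ``is even on real classes'' is inverted: the sum is even exactly when the character value is rational, and the odd contributions arise precisely from the irrational values of type $b7$, $b15$, $b23$, each pair summing to $-1$.
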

Here, $\chi_i$ where $1\leq i \leq 26$ denotes the $i$-th irreducible representation as listed
in the ATLAS~\cite{atlas}.

\medskip

The paper is organized as follows.
In Section \ref{sec:odd}, we study the cases 
$7AB$, $14AB$, $15AB$, $21AB$, $23AB$. 
In Section \ref{sec:others}, we study the remaining cases. 
In the final section, we use Theorem~\ref{thm:main} to prove 
Conjecture~\ref{conj:CDH}.

\bigskip

\noindent{\it Acknowledgments.} \ 
The first two authors like to thank the Hausdorff Research Institute of Mathematics for the 
hospitality received during their stay for the Mathematical Physics trimester program.
We also like to thank M.~Cheng, T.~Gannon and S.~Hohenegger for discussions.
Finally, we are grateful to the referees for the suggested improvements of the exposition.

\section{The non-even cases}\label{sec:odd}

In this section, we prove Theorem~\ref{thm:main} for the cases 
$7AB$, $14AB$, $15AB$, $21AB$ and $23AB$. 

\smallskip

The strategy is to obtain the
parity properties of $\Sigma_{\ell X}(\tau)$ from the parity properties of a modular
form for a group $\Gamma_0(N)$ which in turn can be proven by an application of 
Sturm's theorem~\cite{Sturm}. The theorem allows to obtain a divisibility property of
the Fourier coefficients of a modular form of weight $k$ for $\Gamma_0(N)$ by veryfing 
this property only for the first~$n$ coefficients where $n$ is an explicit given bound 
depending only on $N$ and $k$,
\[
n\geq \frac{k}{12}[{\rm SL}_2(\ZZ):\Gamma_0(N)]. 
\] 


We let
\[\eta(\tau)=q^{1/24}\prod_{m=1}^{\infty}(1-q^m)\]
be the Dedekind $\eta$-function and consider
for $N\geq 2$ the Eisenstein series
\begin{eqnarray}\label{eq:phiN}
\phi_2^{(N)}(\tau)&=&\frac{24}{N-1}\,q\,\frac{d}{dq}
\log\left(\frac{\eta(N\tau)}{\eta(\tau)}\right)\\ \nonumber
&=&1+\frac{24}{N-1}\sum_{k=1}^{\infty}\sigma_1(k)(q^k-Nq^{Nk})
\end{eqnarray} of weight $2$ for $\Gamma_0(N)$.


\begin{proof}[\it Proof of Theorem~\ref{thm:main} for the case $7AB$]

Define the function $f_{m}(\tau)$ as follows: 
\begin{align*}
f_m(\tau)&=\frac{1}{4}\Bigl(\vartheta_3(\frac{m\tau}{8})-\vartheta_4(\frac{m\tau}{8})\Bigr) \end{align*}
where $\vartheta_3(\tau)=1+\sum_{m=1}^{\infty}2q^{m^2}$ 
and $\vartheta_4(\tau)=1+\sum_{m=1}^{\infty}2(-q)^{m^2}$.
Then 
\begin{align*}
f_7(\tau)&=\frac{1}{4}\Bigl(\vartheta_3(\frac{7\tau}{8})-\vartheta_4(\frac{7\tau}{8})\Bigr)=
q^{7/8}+q^{63/8}+q^{175/8}+q^{343/8}+\cdots 
\end{align*}
We call $f_7(\tau)$ the ``parity function.'' 

We have to show that all coefficients of 
$\Sigma_{7AB}(\tau)+f_7(\tau)$ are even since 
the coefficient of $q^{n/8}$ in $f_7(\tau)$ is odd 
if and only if $n=7 m^2$, where $m$ is odd.
This follows if all coefficients of 
$(\Sigma_{7AB}(\tau)+f_7(\tau))\eta(\tau)^3$ are even since 
$\eta(\tau)^{-3}$ has integral coefficients.

\medskip

Let 
\[
E_2(\tau)=1-24\sum_{m=1}^{\infty}\sigma_1(m)q^m. 
\]
By {\cite[p.~43, Example 4]{DMZ}}, we have 
\begin{align}\label{eqn:DMZ}
\Sigma(\tau)=
-2\slfrac{\Bigl(E_2(\tau)+24\displaystyle \sum_{n=1}^{\infty}\frac{(-1)^n n q^{\frac{1}{2}n(n+1)}}{1-q^n}\Bigr)}{\eta(\tau)^3}.
\end{align}
Thus 
\begin{equation*}\label{eqn:7AB}
(\Sigma_{7AB}(\tau)+f_7(\tau))\eta(\tau)^3 =\frac{1}{8}
\left(\Sigma(\tau)\eta(\tau)^3-14\,\phi_2^{(7)}(\tau)
\right)+ \eta(\tau)^3f_7(\tau) 
\end{equation*}
\begin{equation*}
\qquad\qquad=-\frac{1}{4}\Bigl(
E_2(\tau)+24\displaystyle \sum_{n=1}^{\infty}\frac{(-1)^n n q^{\frac{1}{2}n(n+1)}}{1-q^n}+
7\phi_2^{(7)}(\tau)\Bigr)+ \eta(\tau)^3f_7(\tau) . 
\end{equation*}
First, we observe that the constant term is even. 
Since all coefficients, except for the constant term, of the function  
\[
-\frac{1}{4}\Bigl(
E_2(\tau)+24\displaystyle \sum_{n=1}^{\infty}\frac{(-1)^n n q^{\frac{1}{2}n(n+1)}}{1-q^n}\Bigr)
\]
are even,  it is enough to show that the coefficients of 
the following function, again except for the constant term, 
\[
-\frac{7}{4}\phi_2^{(7)}(\tau)+ \eta(\tau)^3f_7(\tau)=
-\frac{7}{4}-6 q-24 q^2-28 q^3-44 q^4-42 q^5+\cdots.
\]
are even.

Define the ``correction function'' 
\begin{align*}
\frac{7}{4}\vartheta_3(\tau)^4=\frac{7}{4}+14 q+42 q^2+56 q^3+42 q^4+84 q^5
+\cdots .
\end{align*}
The coefficients of 
$\frac{7}{4}\vartheta_3(\tau)^4$ without the constant term are even, 
hence it is enough to show that 
the coefficients of the function 
\begin{align*}
-\frac{7}{4}&\phi_2^{(7)}(\tau)+ \eta(\tau)^3f_7(\tau)+\frac{7}{4}\vartheta_3(\tau)^4=8 q+18 q^2+28 q^3-2 q^4+42q^5+\cdots
\end{align*}
are even.
We are going to prove this using Sturm's theorem. The theta functions 
$\vartheta_3(\tau)$ and $\vartheta_4(\tau)$ can be expressed as a quotient of $\eta$-functions, namely 
\begin{align*}\label{eqn:theta}
\vartheta_3(\tau)=\frac{\eta(2\tau)^5}{\eta(\tau)^2\eta(4\tau)^2} \qquad\text{and}\qquad
\vartheta_4(\tau)=\frac{\eta(\tau)^2}{\eta(2\tau)} .
\end{align*}
It follows using \cite[Theorem 1.64]{Ono} that $\eta(8\tau)^3f_7(8\tau)$ is a 
modular form of weight $2$ for $\Gamma_0(448)$ and hence
\begin{align*}
\sum_{m=1}^{\infty}a_{7AB}(m)q^m & := -\frac{7}{4}\phi_2^{(7)}(8\tau)+ \eta(8\tau)^3f_7(8\tau)
+\frac{7}{4}\vartheta_3(8\tau)^4\\
&=8 q^8+18 q^{16}+28 q^{24}+\cdots 
\end{align*}
is also a 
modular form of weight $2$ for $\Gamma_0(448)$. 
Using Sturm's theorem \cite{Sturm} (see also \cite[Theorem 2.58]{Ono}) and 
the fact that $[{\rm SL}_2(\ZZ):\Gamma_0(448)]=768$, 
the computer verification that 
$a_{7AB}(m) \equiv 0 \pmod{2}$ for $m \leq 129$ shows 
$a_{7AB}(m) \equiv 0 \pmod{2}$ for all $m$. 
 
This completes the proof of the case $7AB$. 
\end{proof}

\medskip

Since the other cases can be handled in complete analogy, we 
collect the relevant information in Table \ref{Tab:NN1}.
For the definition of $\Sigma_{23AB}(\tau)$ in Appendix~A, we use functions 
$f_{23,1}(\tau)$ and $f_{23,2}(\tau)$ which are modular forms for $\Gamma_0(23)$,
explicitly given in \cite[Appendix A.1]{EH2}. 

\begin{table}[thb]
\renewcommand{\arraystretch}{1.2}
\caption{Data for the proofs in Section \ref{sec:odd}}
\label{Tab:NN1}
\begin{center}
{\footnotesize
\begin{tabular}{c|ccccc} 
$\ell X$ & parity function & 
correction function& $\Gamma$&$[{\rm SL}_2(\ZZ):\Gamma]$& Sturm bound \\
\hline
$7AB$ & $f_7(\tau)$ & $\frac{7}{4}\vartheta_3(\tau)^4$ & $\Gamma_0(448)$ & $768$ & $129$ \\
$14AB$ & $f_7(\tau)$&  $\frac{23}{12}\phi_2^{(2)}(\tau)$ & $\Gamma_0(448)$ & $768$ & $129$ \\
$15AB$ & $f_{15}(\tau)$ &$\frac{23}{12}\phi_2^{(2)}(\tau)$ & $\Gamma_0(960)$ & $2304$ & $385$ \\
$21AB$ & $f_7(\tau)$-$f_{63}(\tau)$ &$2\vartheta_3(\tau)^4$ & $\Gamma_0(4032)$ & $9216$ & $1537$ \\
$23AB$ & $f_{23}(\tau)$& $\frac{23}{12}\phi_2^{(2)}(\tau)$ & $\Gamma_0(1472)$ & $2304$ & $385$ \\
\end{tabular}
}
\end{center}
\renewcommand{\arraystretch}{1.0}
\end{table}

\section{The even cases}\label{sec:others}

In this section, we prove that all coefficients of the McKay-Thompson series 
for the remaining conjugacy classes of $M_{24}$ are divisible by two.

\smallskip

We remark that the case $1A$ is clear since $\Sigma_{1A}(\tau)=\Sigma(\tau)$ and equation~(\ref{eqn:DMZ}).
It is trivial to see that for the cases 
$2B$,  $3B$, $4A$, $4C$,  $6B$,  $10A$, $12A$, $12B$,
the coefficients of $\Sigma_{\ell X}(\tau)$ are even because 
$\Sigma_{\ell X}(\tau)$ is $-2$ times an $\eta\mbox{-product}$ (see the appendix). 


\subsection{The cases $2A$, $3A$, $4B$, $5A$, $6A$, $8A$}\label{sec:others1}

We give the detailed proof for the case of $2A$.
\begin{proof}
The McKay-Thompson series for $2A$ is 
\begin{align*}
\displaystyle
\Sigma_{2A}(\tau)&=
\frac{1}{3}\slfrac{\Bigl(
\Sigma(\tau)\eta(\tau)^3 - 4 \phi_2^{(2)}(\tau)\Bigr)}{\eta(\tau)^3}\\ 
&=-2q^{-1/8}-6 q^{7/8}+14 q^{15/8}-28 q^{23/8}+42 q^{31/8}+\cdots.
\end{align*}
The coefficients of $\Sigma(\tau)\eta(\tau)^3$, 
except for the constant term, are divisible by $24$ 
(cf.~(\ref{eqn:DMZ})). 
Moreover, also the coefficients of
$\phi_2^{(2)}(\tau)$, again except for the constant term, are divisible by $24$. 
The constant term of the function 
$\Sigma(\tau)\eta(\tau)^3 - 4 \phi_2^{(2)}(\tau)$ is $6$. 
Therefore, the coefficients of $\Sigma_{2A}(\tau)$ are 
divisible by two. 
\begin{table}[thb]
\renewcommand{\arraystretch}{1.2}
\caption{Data for the proofs in Section \ref{sec:others1}}
\label{Tab:NN2}
\begin{center}
{\footnotesize
\begin{tabular}{c|cc} 
$\ell X$ & $N$ & 
divisor of $\phi_2^{(N)}(\tau)-1$ \\
\hline
$2A$ & $2$ & $24$ \\
$3A$ & $3$ & $12$ \\
$5A$ & $5$ &$6$ \\
$4B$ & $2$, $4$ &$24$, $8$ \\
$8A$ & $4$, $8$ & $8$, $24/7$  \\
$6A$ & $2$, $3$, $6$ & $24$, $12$, $24/5$ \\
\end{tabular}
}
\end{center}
\renewcommand{\arraystretch}{1.0}
\end{table}
\end{proof}

The proof for the other cases is analogous.
The relevant information can be read off from Table~\ref{Tab:NN2} together with the appendix.


\subsection{The case $11A$}

Finally, it remains to show that 
the Fourier coefficients of the McKay-Thompson series for the case $11A$ are 
divisible by two. Because of the extra term 
$\frac{264}{5}(\eta(\tau)\eta(11\tau))^2$
in $\Sigma_{11A}(\tau)$, it is not obvious that the coefficients except for the 
constant term in the numerator for $\Sigma_{11A}(\tau)$ as given in the appendix
are divisible by $24$. 
Therefore, we proceed similar to the cases in Section \ref{sec:others1}.
\begin{proof}
The McKay-Thompson series for $11A$ is 
\begin{align*}
\displaystyle
\Sigma_{11A}(\tau)=\frac{1}{12} 
\slfrac{\Bigl(
\Sigma(\tau) \eta(\tau)^3- 22\,\phi_2^{(11)}(\tau)
+\frac{264}{5}(\eta(\tau)\eta(11\tau))^2\Bigr)}{\eta(\tau)^3}.
 \end{align*}
The coefficients of $\Sigma(\tau)\eta(\tau)^3$, except for the constant term 
are divisible by $24$. We need to prove that the coefficients of the function:
\[
22\,\phi_2^{(11)}(\tau)-\frac{264}{5}(\eta(\tau)\eta(11\tau))^2\\
=22+264 q^2+264 q^3+264 q^4
%
+\cdots 
\]
are also (except for the constant term) divisible by $24$.
The coefficients of $22\phi_2^{(2)}(\tau)$ 
(except for the constant term) are divisible by $24$, 
hence it is enough to show that the coefficients of the function 
\begin{align*}
\sum_{m=1}^{\infty}&a_{11A}(m)q^m := 22\phi_2^{(11)}(\tau)
-\frac{264}{5}(\eta(\tau)\eta(11\tau))^2
-22\,\phi_2^{(2)}(\tau)\\
&=-528 q-264 q^2-1848 q^3-264 q^4-2904 q^5-1584 q^6-3696 q^7
+\cdots
\end{align*}
are divisible by $24$.
Note that this function is a modular form for 
$\Gamma_0(22)$ \cite[p.~130, Proposition 19]{K}. 
Using Sturm's theorem \cite[Theorem 2.58]{Ono} again and 
the fact that $[{\rm SL}_2(\ZZ):\Gamma_0(22)]=36$, 
the verification that 
$a_{11A}(m) \equiv 0 \pmod{24}$ for $m \leq 7$ shows 
$a_{11A}(m) \equiv 0 \pmod{24}$ for all $m$. 
Therefore, the coefficients of $\Sigma_{11A}(\tau)$ are 
divisible by two. 
\end{proof}

\medskip

Sections \ref{sec:odd} and \ref{sec:others} together 
prove Theorem~\ref{thm:main}. 


\section{Multiplicities of Irreducibles}

Recall that $\chi_i$ denotes the $i$-th irreducible $M_{24}$-representation as in~\cite{atlas}.
We will use Theorem~\ref{thm:main} to prove the following result about
the multiplicities of the $M_{24}$-representation $\lambda \oplus\bar\lambda$ with irreducible $\lambda$
inside the Mathieu moonshine module constituents $K_n$:
\begin{thm}\label{application}
Let $n=\ell m^2\equiv 7 \pmod{8}$ and let $K_n$ be the degree $n$-part of the Mathieu moonshine 
module $K$.
Then the following numbers are odd (and therefore positive):
\begin{itemize}
\setlength{\itemsep}{-3pt}
\item For $\ell=7$, the multiplicity of  $\chi_3\oplus \chi_4$ in $K_n$
plus the multiplicity of $\chi_{12}\oplus \chi_{13}$ in $K_n$;
\item for $\ell=7$ and $m$ divisible by $3$, the multiplicity of $\chi_{15} \oplus \chi_{16}$ in $K_n$;
\item  for $\ell=15$, the multiplicity of $\chi_5\oplus \chi_6$ in  $K_n$;
\item  for $\ell=23$, the multiplicity of  $\chi_{10}\oplus \chi_{11}$ in $K_n$.
\end{itemize}
\end{thm}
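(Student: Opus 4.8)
\textbf{The plan is to} extract the parity statement about individual McKay--Thompson series coefficients, provided by Theorem~\ref{thm:main}, and translate it into a statement about multiplicities of irreducible constituents in $K_n$ by exploiting the orthogonality relations for the irreducible characters of $M_{24}$. The key observation is that for $n=\ell m^2\equiv 7\pmod 8$ with $\ell\in\{7,15,23\}$ (and $m$ odd, and in the $\ell=7$ case also tracking divisibility by $3$), Theorem~\ref{thm:main} tells us precisely which of the $\Sigma_{\ell X}(\tau)$ have an odd coefficient of $q^{n/8}$. Writing $c_{\ell X}(n):=\tr(g\mid K_n)$ for $g$ in class $\ell X$, we therefore know the parity of $c_{\ell X}(n)$ for every conjugacy class.

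First I would write the multiplicity of an irreducible $\lambda$ in $K_n$ via the standard inner product
\[
\langle \chi_\lambda, \operatorname{ch} K_n\rangle \;=\; \frac{1}{|M_{24}|}\sum_{g\in M_{24}} \overline{\chi_\lambda(g)}\,c(g,n),
\]
or equivalently, summing over conjugacy classes, $\sum_{\ell X} |C_{\ell X}|^{-1}\,\overline{\chi_\lambda(\ell X)}\,c_{\ell X}(n)$ where $|C_{\ell X}|$ is the centralizer order. Since by Theorem~\ref{strongmoonshine} the module $K_n$ decomposes into conjugate pairs $\lambda\oplus\bar\lambda$, the relevant quantity for each bullet is the multiplicity $a_\lambda$ of such a pair, and the aim is to show a specified $\ZZ$-linear combination of these $a_\lambda$ is odd. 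The natural strategy is to reduce the character-theoretic identity modulo $2$: the parities of the $c_{\ell X}(n)$ are known, and the character values $\chi_\lambda(\ell X)$ from the ATLAS~\cite{atlas} are explicit integers (or algebraic integers in the conjugate-pair cases, whose sum $\chi_\lambda+\chi_{\bar\lambda}$ is rational). The point is that only the classes $\ell X$ for which $c_{\ell X}(n)$ is odd contribute modulo $2$, and by Theorem~\ref{thm:main} these are exactly the classes in the relevant $\ell$-family. One then reads off the congruence by pairing the known odd-parity classes against the character values of the candidate irreducibles $\chi_3\oplus\chi_4$, $\chi_{12}\oplus\chi_{13}$, $\chi_{15}\oplus\chi_{16}$ (for $\ell=7$), $\chi_5\oplus\chi_6$ (for $\ell=15$), and $\chi_{10}\oplus\chi_{11}$ (for $\ell=23$).

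The cleanest route avoids clearing the denominator $|M_{24}|$ (which introduces factors of $2$) by instead combining the orthogonality relations into an integer relation directly: since the $a_\lambda$ are honest nonnegative integers whose existence and integrality is guaranteed by Theorem~\ref{strongmoonshine}, I would look for a single projection, realized as an integral combination $\sum_{\ell X} b_{\ell X}\,c_{\ell X}(n)$ with $b_{\ell X}\in\ZZ$, that equals the targeted multiplicity (or multiplicity sum) for each $\ell$. Reducing that integer identity modulo $2$ leaves only the odd-$c$ classes on the right, and the task becomes a finite check, using the ATLAS character table, that the resulting parity is indeed odd. The case distinction for $\ell=7$ (whether $m$ is divisible by $3$) enters here through the $21AB$ series: when $3\mid m$ the coefficient in $\Sigma_{21AB}$ vanishes in parity while it is odd otherwise, and this is exactly what distinguishes the $\chi_{15}\oplus\chi_{16}$ bullet from the combined $\chi_3\oplus\chi_4$ plus $\chi_{12}\oplus\chi_{13}$ bullet.

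\textbf{The hard part} will be organizing the character-theoretic bookkeeping so that the desired multiplicity combinations, and no others, survive the mod-$2$ reduction; concretely, verifying that the specific grouping of irreducibles in each bullet (e.g.\ the \emph{sum} of two multiplicities for generic $\ell=7$, but a single multiplicity for $\ell=7$ with $3\mid m$) matches exactly the pattern of odd coefficients dictated by Theorem~\ref{thm:main}. This requires care because the character values at the classes $7AB$, $14AB$, $15AB$, $21AB$, $23AB$ involve irrationalities that only become rational after passing to the conjugate-pair characters $\chi_\lambda+\chi_{\bar\lambda}$, so one must confirm that the relevant ATLAS entries combine to give the right integer parities. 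Once the correct integer combination is identified for each $\ell$, the verification reduces to a finite and routine computation with the known character table.
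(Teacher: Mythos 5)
You have assembled the right ingredients---the coefficient parities from Theorem~\ref{thm:main}, the conjugate-pair structure of $K_n$ from Theorem~\ref{strongmoonshine}, the ATLAS character values, and even the correct role of $21AB$ (even coefficient when $3\mid m$, odd otherwise) in separating the two $\ell=7$ bullets---but the linear-algebra mechanism you propose for tying them together would fail. Your first formulation, via orthogonality, cannot be reduced modulo $2$ because of the non-integral coefficients $1/|C_{\ell X}|$, as you yourself note. Your proposed repair is to find integers $b_{\ell X}$ with $\sum_{\ell X} b_{\ell X}\,c_{\ell X}(n)$ \emph{exactly} equal to the targeted multiplicity (or multiplicity sum). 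No such integers exist: a functional of the form $V\mapsto\sum_{\ell X} b_{\ell X}\tr(g_{\ell X}|V)$ on virtual characters equals a prescribed multiplicity functional if and only if $\sum_{\ell X} b_{\ell X}\,\chi_i(\ell X)$ takes the prescribed values for every irreducible $\chi_i$; since the character table is an invertible matrix, this system has a unique solution, namely the orthogonality coefficients $|\ell X|\,\overline{\chi_j(\ell X)}/|M_{24}|$, which are not integers (consider the identity class). So the ``cleanest route'' is closed, and the bookkeeping you defer as ``the hard part'' is precisely the missing content of the proof.

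What does work---and is what the paper does---is to run the identity in the opposite direction: do not invert the character table, use one row of it at a time. For $g$ in a single well-chosen class, $\tr(g|K_n)=\sum_{i} m_{\chi_i}\tr(g|\chi_i)$ is an identity of algebraic integers that can legitimately be reduced modulo $2$. Theorem~\ref{strongmoonshine} forces $m_\lambda$ to be even for every real $\lambda$ and $m_\lambda=m_{\bar\lambda}$ for complex $\lambda$, so modulo $2$ the right-hand side collapses to $\sum m_\lambda\,\tr(g|\lambda\oplus\bar\lambda)$, a sum over conjugate pairs with \emph{integer} coefficients; the ATLAS then shows the pair-trace is odd only for $(\chi_5,\chi_6)$ at $15A$, only for $(\chi_{10},\chi_{11})$ at $23A$, only for $(\chi_3,\chi_4)$ and $(\chi_{12},\chi_{13})$ at $7A$, and only for these two together with $(\chi_{15},\chi_{16})$ at $21A$. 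Each bullet is then a single congruence: the left-hand side is odd at $15A$, $23A$, $7A$ for the relevant $n$ by Theorem~\ref{thm:main}, giving the first, third and fourth bullets; and for $n=7m^2$ with $3\mid m$ the left-hand side at $21A$ is \emph{even}, which combined with the already-established oddness of the $(\chi_3,\chi_4)$-plus-$(\chi_{12},\chi_{13})$ count forces the $(\chi_{15},\chi_{16})$ multiplicity to be odd. Your outline circles around this argument but never states it, and without it the proof does not close.
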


\begin{proof}
Let 
\begin{equation}\label{decomp}
K_n=\bigoplus_{i=1}^{26} m_{\chi_i}\,  \chi_i
\end{equation}
be the decomposition of the $M_{24}$-representation
$K_n$ into its irreducible constituents, i.e.\ $m_{\chi_i}$ is the multiplicity in which
$\chi_i$ occurs in $K_n$. 
Taking on both sides of~(\ref{decomp}) the trace for an element $g\in M_{24}$ we have
\begin{equation}\label{trace}
\tr(g|K_n)=\sum_{i=1}^{26} m_{\chi_i}\,   \tr(g|\chi_i)
\end{equation}
and $\tr(g|K_n)$ is the coefficient of $q^{n/8}$ in $\Sigma_{\ell X}(\tau)$ with
${\ell X}$ the conjugacy class of $g$.
We note that if $K_n$ is non-zero then $n$ is odd, i.e.\ we only need to consider
the cases $n=\ell m^2$ with $m$ odd.

\smallskip

First, we consider the cases $\ell=15$ and  $\ell=23$ and let $g$ be an element of type $15A$ or $23A$, 
respectively.  If $n$ is of the form $\ell m^2$ with $m$ odd, the left-hand side of~(\ref{trace}) 
is odd by Theorem~\ref{thm:main}. 
For the right-hand side
an inspection of the character table of $M_{24}$~\cite{atlas} shows that if $\lambda\not=\bar\lambda$ one has that
$\tr (g|\lambda)=\tr (g| \bar \lambda)$ is integral unless $(\lambda,\bar\lambda)=(\chi_5,\chi_6)$ or 
$(\chi_{10},\chi_{11})$,
respectively, in which case $\tr (g|\lambda+\bar\lambda)=-1$. Using Theorem~\ref{strongmoonshine}, 
it follows that $m_\lambda=m_{\bar\lambda}$ has then to be odd.

For the cases  $\ell=7$, we let $g$ be an element of type $7A$. Here, only
the characters $(\chi_3,\chi_4)$ or $(\chi_{12},\chi_{13})$ can provide an odd contribution 
to the right-hand side of (\ref{trace}) and so in total an odd number of those pairs has to appear.
For $m$ divisible by $3$, we take in addition an element $g$ of type $21A$. 
Here $(\chi_3,\chi_4)$, $(\chi_{12},\chi_{13})$ and $(\chi_{15},\chi_{16})$ provide an odd contribution. 
Since the total number of pairs of type $(\chi_3,\chi_4)$ and $(\chi_{12},\chi_{13})$ is odd and 
by Theorem \ref{thm:main} the  left-hand side of (\ref{trace}) is even, it follows that 
the number of pairs $(\chi_{15},\chi_{16})$ is odd, too.
\end{proof}

\smallskip

It is clear that Theorem~\ref{application} implies Conjecture~\ref{conj:CDH}.

\medskip

The argument in the proof can also be applied to the elements $g$ of type ${\ell X}$ as
studied in Section~\ref{sec:others}. One directly recovers Theorem~\ref{thm:main} for those
cases. However, Theorem~\ref{strongmoonshine} as stated is a refinement which uses parts of
our original arguments in its proof.

\smallskip

We remark that it is likely that the methods in proving Theorem~\ref{thm:main} and Theorem~\ref{application} 
could also be applied to the other cases of umbral moonshine.


\appendix

\section{McKay-Thompson series}

We list all McKay-Thompson series using the ATLAS~\cite{atlas} names for the 
conjugacy classes of $M_{24}$. We refer to Section~\ref{sec:odd} for 
the functions used.
{\small
\renewcommand{\arraystretch}{1.65}
$
\begin{array}{rl}
{\bf 1A:}&\
\Sigma_{1A}(\tau)=
\Sigma(\tau)
\\
{\bf 2A:}&\ 
\Sigma_{2A}(\tau)=
\frac{1}{3} 
\slfrac{\left(
\Sigma(\tau)\eta(\tau)^3 - 4\,\phi_2^{(2)}(\tau)
\right)}{\eta(\tau)^3} \\
{\bf 2B:}& \
\Sigma_{2B}(\tau)=
-2\slfrac{\eta(\tau)^5}
{\eta(2\tau)^4}\\ 
{\bf 3A:}& \
\Sigma_{3A}(\tau)=
\frac{1}{4} 
\slfrac{\left(
\Sigma(\tau)\eta(\tau)^3 - 6\,\phi_2^{(3)}(\tau)
\right)}{\eta(\tau)^3} \\
{\bf 3B:}& \
\Sigma_{3B}(\tau)=
-2\slfrac{\eta(\tau)^3}
{\eta(3\tau)^2}\\ 
{\bf 4A:}& \
\Sigma_{4A}(\tau)=
-2\slfrac{\eta(2\tau)^8}
{\bigl(\eta(\tau)^3\eta(4\tau)^4\bigr)} \\ 
{\bf 4B:}& \
\Sigma_{4B}(\tau)=
\frac{1}{6} 
\slfrac{\left(
\Sigma(\tau)\eta(\tau)^3 +2\phi_2^{(2)}(\tau)-12\,\phi_2^{(4)}(\tau)
\right)}{\eta(\tau)^3}\\ 
{\bf 4C:}& \
\Sigma_{4C}(\tau)=
-2\slfrac{\eta(\tau)\eta(2\tau)^2}
{\eta(4\tau)^2}\\ 
{\bf 5A:}&\ 
\Sigma_{5A}(\tau)=
\frac{1}{6} 
\slfrac{\left(
\Sigma(\tau)\eta(\tau)^3 - 10\, \phi_2^{(5)}(\tau)
\right)}{\eta(\tau)^3}\\
{\bf 6A:}& \
\Sigma_{6A}(\tau)=
\frac{1}{12} 
\slfrac{\left(
\Sigma(\tau)\eta(\tau)^3 +2\,\phi_2^{(2)}(\tau)+6\,\phi_2^{(3)}(\tau)
-30\,\phi_2^{(6)}(\tau)
\right)}{\eta(\tau)^3} \\
{\bf 6B:}& \
\Sigma_{6B}(\tau)=
-2\slfrac{\eta(2\tau)^2\eta(3\tau)^2}
{\bigl(\eta(\tau)\eta(6\tau)^2\bigr)}\\
{\bf 7AB:}& \
\Sigma_{7AB}(\tau)=
\frac{1}{8}
\slfrac{\left(\Sigma(\tau)\eta(\tau)^3-14\,\phi_2^{(7)}(\tau)
\right)}{\eta(\tau)^3} \\ 
{\bf 8A:}& \
\Sigma_{8A}(\tau)=
\frac{1}{12} 
\slfrac{\left(
\Sigma(\tau)\eta(\tau)^3 +6\,\phi_2^{(4)}(\tau)-28\,\phi_2^{(8)}(\tau)
\right)}{\eta(\tau)^3} \\ 
{\bf 10A:}& \
\Sigma_{10A}(\tau)=
-2\slfrac{\eta(2\tau)\eta(5\tau)}
{\eta(10\tau)}\\ 
{\bf 11A:}& \
\Sigma_{11A}(\tau)=
\frac{1}{12} 
\slfrac{\left(
\Sigma(\tau)\eta(\tau)^3 - 22\,\phi_2^{(11)}(\tau)
+\frac{264}{5}\,(\eta(\tau)\eta(11\tau))^2
\right)}{\eta(\tau)^3} \\
{\bf 12A:}& \
\Sigma_{12A}(\tau)=
-2\slfrac{\eta(4\tau)^2\eta(6\tau)^3}
{\bigl(\eta(2\tau)\eta(3\tau)\eta(12\tau)^2\bigr)} 
\\
{\bf 12B:}& \
\Sigma_{12B}(\tau)=
-2\slfrac{\eta(\tau)\eta(4\tau)\eta(6\tau)}
{\bigl(\eta(2\tau)\eta(12\tau)\bigr)}
\qquad\qquad\qquad\qquad\qquad\qquad\qquad\qquad\\
{\bf 14AB:}& \
\Sigma_{14AB}(\tau)=
\frac{1}{24}\,
\Bigl(\Sigma(\tau)\eta(\tau)^3
+\frac{2}{3}\,\phi_2^{(2)}(\tau)
+14\phi_2^{(7)}(\tau)
-\frac{182}{3}\,\phi_2^{(14)}(\tau)+\Bigr.
\\
&\qquad\qquad\qquad\quad
\slfrac{\Bigl.+112\,\eta(\tau)\eta(2\tau)\eta(7\tau)\eta(14\tau)\Bigr)
}{\eta(\tau)^3}   \\
{\bf 15AB:}& \
\Sigma_{15AB}(\tau)=
\frac{1}{24}\,
\Bigl(
\Sigma(\tau)\eta(\tau)^3+\frac{3}{2}\,\phi_2^{(3)}(\tau)+5\, \phi_2^{(5)}(\tau) 
- \frac{105}{2}\, \phi_2^{(15)}(\tau)+\Bigr.
\\
&\qquad\qquad\qquad\quad
\slfrac{\Bigl. +90\, \eta(\tau) \eta(3\tau) \eta(5\tau) \eta(15\tau)
\Bigr)}{\eta(\tau)^3}  \\  
{\bf 21AB:}& \
\Sigma_{21AB}(\tau)=
-\frac{1}{3}
\left(
\slfrac{7\,\eta(7\tau)^3}{\bigl(\eta(3\tau)\eta(21\tau)\bigr)}
-\slfrac{\eta(\tau)^3}{\eta(3\tau)^2}
\right) \\ 
{\bf 23AB:}& \
\Sigma_{23AB}(\tau)=
\frac{1}{24}\,
\slfrac{\Bigl(
\Sigma(\tau) \eta(\tau)^3-  46\, \phi_2^{(23)}(\tau) +
 \frac{276}{11} \, f_{23,1}(\tau) +\frac{1932}{11} f_{23,2}(\tau)
\Bigr)}{\eta(\tau)^3}  \\ 
\end{array}
$
}


\end{document}